\newtheorem{lemma}{Lemma}
\newtheorem{theorem}{Theorem}
\newtheorem{proposition}{Proposition}
\newtheorem{remark}{Remark}
\newcommand{\B}{\mathcal{B}}
\newcommand{\PP}{\mathcal{P}}
\newcommand{\R}{\mathcal{R}}
\newcommand{\Q}{\mathcal{Q}}
\newcommand{\G}{\mathcal{G}}
\newcommand{\bbZ}{\mathbb{Z}}
\newcommand{\bbN}{\mathbb{N}}
\newcommand{\PPP}{\boldsymbol{\mathcal{P}}}
\newcommand{\XBT}{(X, \mathcal{B}, \mu, T)}
\newcommand{\XBG}{(X, \mathcal{B}, \mu, T_g)}
\newcommand{\YCS}{(Y, \mathcal{C}, \nu, S)}
\begin{document}

\bibliographystyle{plain}

\title{Some generic properties of processes  }

\author{ Benjamin Weiss}

\address {Institute of Mathematics\\
 Hebrew University of Jerusalem\\
Jerusalem\\
 Israel}
\email{weiss@math.huji.ac.il}

\setcounter{page}{1}
\date{}

\maketitle
\begin{abstract}
 For a given ergodic measure preserving transformation T of a standard measure
space $(X, \mathcal{B}, \mu)$ each finite labelled partition defines an ergodic stationary process. There
is a complete metric on the space of partitions which is separable. Various
generic properties of these processes will be given. For example: $\\$
    1. The generic partition defines a process that is not Rosenblatt mixing. $\\$
    2. If  T is a K-automorphism that is not Bernoulli then the generic partition is also K but
not Bernoulli.

    Extensions to the relative setting and to actions of amenable groups will also be discussed.
\end{abstract}

\section{Introduction}

  The study of generic properties of measure preserving transformations goes back to the early days of ergodic theory, and the first results are clearly
  exposed in the chapter
  on Category in the classic book of Paul Halmos \cite{H}. More recent results can be found, for example in \cite{GTW2}, \cite{AGTW} and \cite{Sc}. In this note I shall
  take up the study of generic properties of the class of finite valued processes that are defined by one fixed ergodic measure preserving system $\XBT$. Such processes are defined
  by labelled partitions of the measure space.

  With the metric on sets defined by the measure of the symmetric difference between two sets, one defines a metric
  on the space of labelled partitions with the same number of elements with respect to which the space becomes a Polish space. The metric that this induces on the processes
  defined by these partitions is the well-known d-bar metric. My study of these generic properties was motivated by the open problem: Does Rosenblatt mixing imply Bernoulli (for the definition see
  the next section). Instead of an explicit construction I thought that perhaps this can be true generically. Instead I found that the opposite is the case, namely for any ergodic system
  the generic process is not Rosenblatt mixing. This shows that the converse implication fails generically as explained in the next section.

    In general, any process defined by a partition defines a factor system. The standard mixing properties descend to factors. On the other hand a system can fail
    to have a mixing property while it might have many factors that do. For example any K-system has many Bernoulli factors even if the system itself is not Bernoulli.
    However, we will see in Section 3 that the generic partition of a non Bernoulli system defines a non Bernoulli process.

    Continuing this study revealed that
  a similar result holds for strong, mild and weak mixing as detailed in $\S 4$. In $\S 5$ these results will be extended to the 
       to the relative setting, and in $\S 6$  to
  actions of countable amenable groups.

   Perhaps a more basic property of a finite partition is that of being a generator.
  The results in section 4 show that for a zero entropy ergodic transformation  the subsets $A$ such that the partition $\{A, X \setminus A \}$ is a generator, is generic. In positive entropy
  the continuity of the entropy shows that a similar statement cannot hold. In $\S 7$ I will give two kinds of extensions of this result to positive entropy.
  In the first, one fixes an arbitrary factor $\B_0$ of full entropy and then  a generic partition together with $\B_0$ generates. In the second, one restricts to the partitions
  that have full entropy, which is a closed set, and in this closed set, the generators form a generic set. These results are analogous to the fact that for positive entropy systems the generic extension
  is isomorphic to the to the base, see \cite{AGTW}.
      As a general reference for the standard facts that I will make use of see \cite{R} and \cite{Sh}.
\section*{Acknowledgement}
  I would like to thank Jon Aaronson, Eli Glasner and Jean-Paul Thouvenot for the helpful discussions and correspondences that we had during
  the course of my work on this paper.

\section{Rosenblatt mixing}

   A stationary stochastic process $\{ X_n \}$ satisfies the Rosenblatt mixing condition if the dependence coefficient
   $\alpha(n)$ defined by

   $$\alpha(n) = \textrm{sup} \{| \mathbb{P}(A \cap B)- \mathbb{P}(A)\mathbb{P}(B)| : A \in \mathcal{F}\{ X_i: i < 0 \} ; \hspace{0.2cm} B \in \mathcal{F}\{ X_i: i \geq n \} $$
    tends to zero as $n$ tends to infinity.

   Here $\mathcal{F}\{ X_i: i \leq 0 \}$ denotes the $\sigma$-algebra generated by the random variables $\{ X_i: i < 0 \}$ and similarly for
   $\mathcal{F}\{ X_i: i \geq n \}$.

   Such processes are also called $\alpha$-mixing. Nathaniel Martin showed in \cite{M} that if the decay is rapid enough then the process is isomorphic to a Bernoulli shift.
   Somewhat earlier, Meir Smorodinsky in \cite{Sm} exhibited an example of a process that was isomorphic to a Bernoulli shift but was not Rosenblatt mixing.
   His proof relied on an unpublished result of Harry Kesten. In \cite{B} Bradley gave a detailed proof of Kesten's result. In this section we will
    show that non-Rosenblatt mixing processes are in fact generic in any ergodic system, in particular in Bernoulli shifts. Since factors of Bernoulli shifts are also 
    Bernoulli this gives a plethora of examples of non-Rosenblatt mixing transformations that are Bernoulli. 

   Let $\XBT$ be an aperiodic ergodic process. For simplicity we will work in the space of two valued processes but the argument will work just as well for
    finite valued processes. Any function $f:X \rightarrow \{0, 1 \}$ defines a stationary process by setting $\{ X_n(x) = f(T^nx) \}$. With the
   $L_1$-metric these functions form a complete separable metric space $Z$. We shall show that the set of  functions that define non-Rosenblatt mixing processes
   contains a dense $G_{\delta}$ subset of this space.

   It will be convenient to identify elements  $f \in Z$ with the partition
   $\PP = \{ P_0, P_1 \}$
    they define where $P_i = f^{-1}(i)$. We will use the notation
   $ \PP_k^l = \bigvee_{i=k}^l T^{-i}\PP$. Define now the subsets:

   $$U_n = \{ f \in Z: \exists \hspace{0.2cm}  k \in \bbN , \hspace{0.2cm}  \textrm{and sets} \hspace{0.2cm} C \in \PP_{-k}^0 , \hspace{0.2cm} D \in \PP_n^{n+k}$$
   $$ \textrm{such that} \hspace{1cm} |\mu(C \cap D) - \mu(C) \mu(D)| > \frac{1}{10} \}.$$

   These sets are clearly open and if $ f \in \bigcap U_n$ then clearly the process that $f$ defines is not Rosenblatt mixing. It thus remains to show, and this is of course the more difficult
   part of the proof, that each $U_n$ is dense.
     For this we will use a version of the Rokhlin lemma which is perhaps not so widely known
     and so we will sketch a proof in the ergodic case.

     \begin{lemma}
      If $\XBT$ is aperiodic and ergodic then for any $N$ one can find a measurable set $B$ such that
    the sets $ \{ T^iB : 0 \leq i < N \}$ are disjoint while $\mu ( \bigcup_{i=0}^N T^iB)  = 1 $.
     \end{lemma}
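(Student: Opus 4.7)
The plan is to reformulate the two conditions on $B$ as a single statement about first-return times, and then to build $B$ inside a sufficiently tall Kakutani tower.

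First I would observe that the two required properties are jointly equivalent to the statement that the first-return time $r_B(b) = \min\{j \geq 1 : T^j b \in B\}$ lies in $\{N, N+1\}$ for almost every $b \in B$. Indeed, the disjointness of $\{T^i B : 0 \leq i < N\}$ is equivalent to $r_B \geq N$: a coincidence $T^i b_1 = T^j b_2$ with $b_1, b_2 \in B$ and $0 \leq i < j < N$ is the same as $T^{j-i} b_2 = b_1 \in B$ with $0 < j-i < N$. On the other hand, for any $x$ the minimal $i \geq 0$ with $T^{-i}x \in B$ equals the unique $j \in \{0, 1, \ldots, r_B(b)-1\}$ with $x = T^j b$, where $b$ is the most recent element of $B$ in the past of $x$; so $\bigcup_{i=0}^N T^i B$ has full measure iff $r_B(b) \leq N+1$ almost everywhere.

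Next I would apply the standard Rokhlin lemma with height $M = N(N+1)$ to obtain a base $B_0$ such that $T^0 B_0, T^1 B_0, \ldots, T^{M-1} B_0$ are pairwise disjoint. This forces $r_{B_0}(y) \geq M$ for every $y \in B_0$, and by ergodicity the Kakutani columns $\{y, Ty, \ldots, T^{r_{B_0}(y)-1}y\}$, $y \in B_0$, partition $X$ up to a null set, each column having height at least $N(N+1)$.

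Finally, for each integer $r \geq N(N+1)$ I would choose a decomposition $r = g_1 + \cdots + g_{k+1}$ with all $g_i \in \{N, N+1\}$; this is possible because the requirement is exactly that some integer $k+1$ lie in the interval $[r/(N+1), r/N]$, which has length $r/(N(N+1)) \geq 1$. Making a canonical choice for each value of $r$, I would define $B$ by placing its points inside each column at the heights $0, g_1, g_1+g_2, \ldots, g_1+\cdots+g_k$. Then $r_B$ takes values only in $\{N, N+1\}$, and the conclusion follows from the reformulation of the first step. The main obstacle is the combinatorial fact that not every $r$ admits a representation as a sum of $N$'s and $(N+1)$'s; handling this is precisely what forces the auxiliary tower height to be at least $N(N+1)$ rather than just of order $N$.
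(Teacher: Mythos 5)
Your proof is correct and takes essentially the same route as the paper's: obtain a Kakutani--Rohlin decomposition whose columns all have height at least roughly $N^2$, cut each column into consecutive blocks of lengths $N$ and $N+1$, and let $B$ be the union of the first levels of these blocks. The only differences are cosmetic --- the paper uses tower height $N^2$ and exhibits the block decomposition by writing $H=qN+r$, while you use $N(N+1)$ and an interval-counting argument, and you spell out the first-return-time reformulation that the paper dismisses with ``clearly $B$ will have the desired properties.''
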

     \begin{proof}
  By the aperiodicity we can find a set $A$ of positive measure so that
    the sets $ \{ T^iA : 0 \leq i < N^2  \}$ are pairwise disjoint. Let $r_A(x)$ be the time of the first return of a point $x \in A$ to the set $A$. By assumption
    $r_A(x) \geq N^2 $ and we partition $A$ into $A_k = \{ x \in A ; r_A(x) = k \}$. Each column above an $A_k$ can be partitioned into disjoint consecutive blocks of size
    $N$ and $N+1$ since the height is at least $N^2$. Indeed writing the height $H \geq N^2$ as $H = qN + r$ with $ r < N$ and $q \geq N-1$ we can use $r$ blocks of size $N+1$ and then
     continue with blocks of size $N$ until we exhaust the column of height $H$. By ergodicity these columns exhaust the measure of the space and we define $B$  to be the first level
    in each  of these blocks. Clearly $B$ will have the desired properties. $\Box$
    \end{proof}

This decomposition of the space is called a $(B,N)$- \textbf{Kakutani-Rohlin} tower, or briefly a
$(B,N)$-KR tower.

   We return now to the proof that each $U_n$ is dense.
 Suppose that we are given a function $f \in Z$ with a corresponding partition $\PP$, a $U_n$ and  $\epsilon >0$. Choose
$N > n$ and also such that $\frac{1}{N} < \frac{\epsilon}{10}$. let $B$ be the base of an $(B, 3N^2)$-tower. We modify $\PP$ to $\Q$ in two steps.

\textbf{Step 1}. Put in $Q_0$ all points in the first $N$ levels of the tower, i.e. $\cup_{i=0}^{N-1} T^iB \subset Q_0$, then put $T^NB$ in $Q_1$ and the next $N$ levels again in $Q_0$,
 $\cup_{i=N+1}^{2N} T^iB \subset Q_0$.

\textbf{Step 2}. All levels of height $k\frac{N}{2}$ with $k >4$ are put in $Q_1$.

  On all the other levels we put $ x \in Q_i \hspace{0.2cm} \textrm{if and only if} \hspace{0.2cm} x \in P_i$ .

  Clearly the resulting partition $\Q$ differs from $\PP$ by at most $\epsilon$. It remains to show that $\Q \in U_n$. The purpose of the modification that we made was to ensure that
  the base $B$ is now measurable with respect to $\bigvee_{i=a}^b T^{-i}\Q$ for all $a < b$ such that $b - a > 3N^2 + 2N$. This is because the string $s=0^N10^N$ occurs
  along the $\Q$-name of  $\mu$-a.e. point uniquely with gaps either $3N^2$ or $3N^2 + 1$ apart. To see that indeed $\Q \in U_n$ in the definition of $U_n$ take $ k= N^2$ and define the sets
  $C$ and $D$ as follows:

   $$C = \{ x : T^i(x) \in B \hspace{0.2cm} \textrm{for some} \hspace{0.2cm} i \in [-N^2 , -2N) \}$$
   and
    $$D = \{ x : T^i(x) \in B \hspace{0.2cm} \textrm{for some} \hspace{0.2cm} i \in [n+2N +N^2] \}.$$

    Each of these sets has measure slightly less than $\frac{1}{3}$ while since $n < N$ their intersection is empty. It follows that $\Q \in U_n$ and this concludes the proof of the following
    theorem:

    \begin{theorem} In any ergodic system $\XBT$ for a generic set of two valued functions the process that they define is not Rosenblatt mixing.
    \end{theorem}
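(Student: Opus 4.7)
The plan is to exhibit a dense $G_\delta$ set of functions $f \in Z$ whose associated processes fail Rosenblatt mixing. First I would rephrase ``not Rosenblatt mixing'' as a countable intersection of open conditions: for each $n$ let $U_n$ consist of those $f$ for which there exist a finite $k$ and cylinder events $C \in \PP_{-k}^{0}$, $D \in \PP_{n}^{n+k}$ satisfying $|\mu(C\cap D)-\mu(C)\mu(D)| > c$ for a fixed constant $c$ (the value $1/10$ is convenient). Openness is immediate: for a fixed $k$ the map $f \mapsto \mu(C\cap D)-\mu(C)\mu(D)$ with $C,D$ cylinders built from $T^{-i}P_{a_i}$ is continuous in the $L^1$ topology on $Z$, and the union over finitely many $(k,C,D)$ remains open. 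Any $f \in \bigcap_n U_n$ has $\alpha(n) \ge c$ for every $n$, so the process is not $\alpha$-mixing.

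The core of the proof is showing each $U_n$ is dense. Given $\PP$ corresponding to $f$ and $\epsilon>0$, the plan is to produce a partition $\Q$ within $\epsilon$ of $\PP$ lying in $U_n$. The natural tool is the $(B,H)$-Kakutani--Rokhlin tower from Lemma~1 with $H$ chosen much larger than $n$ and with $1/H < \epsilon/10$ so that modifying $\Q$ on a controlled set of levels costs at most $\epsilon$. The idea is to \emph{plant a marker word} at the bottom of the tower --- e.g.\ force the $\Q$-name on a block of levels at the base to equal a distinctive string such as $0^N 1 0^N$ --- and then to sparsify the occurrences of $1$'s elsewhere enough that this marker word almost surely identifies the tower base. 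Once $B$ is measurable with respect to a window of bounded length in the $\Q$-names, I can take
\[
C = \{x : T^i x \in B \text{ for some } i \text{ in a chosen past window}\},
\]
and $D$ the analogous set with an appropriate window lying to the right of coordinate $n$. If the two windows are wide enough that each catches some level of the tower with probability close to $1/3$ but are positioned so that they cannot both catch a base-return simultaneously when the gap between them is only $n$, then $C$ and $D$ are effectively disjoint while each has measure bounded away from $0$, forcing $|\mu(C\cap D)-\mu(C)\mu(D)|$ to be bounded below by a uniform constant.

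The main obstacle, and the delicate bookkeeping point, is making the marker \emph{rare}: the block $0^N 1 0^N$ must not occur spuriously in the $\Q$-name outside the intended locations, for otherwise the event ``$T^i x \in B$'' would not be recoverable from the $\Q$-names in a bounded window. This is handled by the second modification --- overwriting the partition to place $1$'s at a sparse deterministic sequence of levels of the tower --- so that the only long run of consecutive $0$'s surrounding a $1$ in the $\Q$-name occurs near the base. Provided the density of forced modifications is $O(1/N)$, the total $L^1$ cost stays below $\epsilon$. Once this is arranged, $\Q \in U_n$ and the Baire category theorem delivers the theorem.
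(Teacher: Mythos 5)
Your proposal is correct and follows essentially the same route as the paper: the same open sets $U_n$ with threshold $1/10$, a Kakutani--Rokhlin tower of height much larger than $n$, a planted marker $0^N 1 0^N$ at the base together with sparse forced $1$'s elsewhere to make the marker identifiable, and disjoint past/future window sets $C$, $D$ each of measure bounded away from $0$ (the paper uses a tower of height $3N^2$ and windows of length about $N^2$, giving $\mu(C)\mu(D)\approx 1/9 > 1/10$). No essential difference.
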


    A similar result is valid for finite valued processes with a similar proof.

    \section{K-processes that are not Bernoulli}

    Suppose now that $\XBT$ is  K-system. By Sinai's theorem there are many partitions that define Bernoulli shifts. Recall that we call a process  \textbf{Bernoulli} if it is
    isomorphic to an independent process. Don Ornstein showed that all factors of a Bernoulli process are Bernoulli. The fact that a d-bar limit of Bernoulli processes is also Bernoulli
    implies that the collection of Bernoulli processes in any system is closed in the space of partitions. If $\XBT$ is not isomorphic to a Bernoulli system then we shall show that
    those partitions that define a non-Bernoulli process are generic. To prove this, what we have to show is that the non-Bernoulli partitions are dense since we already know that
    the set of partitions generating non-Bernoulli factors is open.  We will need the weak-Pinsker
    property that was established by Tim Austin \cite{A} a few years ago.
      The weak-Pinsker property asserts that if $\XBT$ is a positive entropy system then for any $\epsilon > 0$ there are
      independent factors $(X, \B_i, \mu, T)$  for $i=0,1$ such that:
      (i)
      the entropy of $(X, \B_0, \mu, T)$ is less than $\epsilon$ ; (ii)  $(X, \B_1, \mu, T)$ is Bernoulli; and (iii) $\B = \B_0 \bigvee \B_1$.

      It follows that if $\XBT$ is non-Bernoulli then $(X, \B_0, \mu, T)$ is necessarily non-Bernoulli.

       \begin{theorem}\label{nonB} If the ergodic system $\XBT$ has positive entropy and is not Bernoulli then the non-Bernoulli partitions form a dense open set
       in the space of partitions.
       \end{theorem}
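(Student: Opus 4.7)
Here is the plan. The openness half is immediate from the remark just preceding the theorem statement: Bernoulli processes are $\bar d$-closed, so the set of partitions whose process is Bernoulli is closed in the partition metric, and thus its complement is open. The real work is density. Given $\PP$ and $\epsilon>0$ I will produce $\Q$ with $d(\PP,\Q)<\epsilon$ whose process has a non-Bernoulli factor; by Ornstein's theorem that factors of Bernoulli shifts are Bernoulli, the $\Q$-process is then itself non-Bernoulli.

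To construct $\Q$, fix $\delta$ with $0<\delta\ll\epsilon$ and apply Austin's weak-Pinsker theorem to split $\B=\B_0\vee\B_1$ with $\B_0,\B_1$ independent, $\B_1$ Bernoulli, and $h(\B_0)<\delta^2$. As was already observed, since $\XBT$ is not Bernoulli the factor $(X,\B_0,\mu,T)$ must itself fail to be Bernoulli. Choose a finite generator $\R$ of $\B_0$ (e.g.\ by Krieger) and set $r=|\R|$. For $N$ very large, use Lemma~1 to obtain a $(B,N)$-KR tower that fills $X$. I now modify $\PP$ only on a thin prefix of each column: reserve the first $K\approx(2N\delta^2)/\log_2 r$ levels of each column to encode the $\R$-name $(\R(x),\R(Tx),\ldots,\R(T^{N-1}x))$ of the base $x\in B$. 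By the Shannon--McMillan--Breiman theorem for the $\R$-process, for $N$ large the typical length-$N$ $\R$-names lie in a set of cardinality at most $2^{N(h(\R)+\delta^2)}\le 2^{2N\delta^2}$, so each typical name can be injectively recorded in $K$ $\R$-symbols; atypical bases are given a default code. Insert a short marker pattern once per column (along the same lines as the $0^N1 0^N$ pattern used in the proof of Theorem~1), so that a.e.\ point can locate the base of its column from its $\Q$-name. Set $\Q=\PP$ on all remaining levels. The altered measure is $O(\delta^2)$ plus lower-order marker terms, so $d(\PP,\Q)<\epsilon$.

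Now let $\R''(x)$ be the $\R$-symbol obtained at $x$ by decoding: using the nearest marker, locate the base and the position inside the column containing $x$, and read off the stored $\R$-name. This is well-defined a.e., so $\R''$ is a partition contained in $\sigma(\bigvee_n T^n\Q)$, and on columns whose base has a typical $\R$-name one has $\R''=\R$ pointwise. By SMB the measure of atypical columns tends to $0$, whence $\mu(\R\neq\R'')\to 0$ and consequently $\bar d$ between the $\R$- and $\R''$-processes tends to $0$ as $N\to\infty$. Since the $\R$-process is non-Bernoulli and Bernoulli processes are $\bar d$-closed, the $\R''$-process is also non-Bernoulli for all sufficiently large $N$; being a factor of the $\Q$-process, it forces the $\Q$-process itself to be non-Bernoulli via Ornstein. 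The main technical obstacle is orchestrating the marker pattern and the SMB-based block encoding so that decoding succeeds $\mu$-a.e.\ while the total altered fraction of each column stays well below $\epsilon$; the strong entropy bound $h(\B_0)<\delta^2$ provided by the weak-Pinsker theorem is precisely what makes these two requirements simultaneously achievable.
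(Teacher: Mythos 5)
Your argument is correct in outline, but it is not the route the paper takes for this theorem in Section 3; it is, almost step for step, the alternative proof the paper itself gives later in Section 6 (Proposition \ref{modify-new} and the discussion preceding it). The paper's primary proof avoids your approximation step entirely: instead of Krieger's generator plus Shannon--McMillan, it invokes Denker's theorem to choose a finite generator $\Q$ of the low-entropy non-Bernoulli factor $\B_0$ whose subshift is \emph{strictly ergodic}, so that every positive-measure $n$-block of $\Q$ lies in a set of cardinality $<2^{(\epsilon/5)n}$ --- a uniform bound, not merely a bound on a typical set. The encoding is then exact, one recovers $\Q$ itself (not an approximation) inside $\bigvee_i T^{-i}\PP'$, and the density step needs no appeal to the openness of the non-Bernoulli set. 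Your version trades Denker for SMB and must therefore pass to a nearby partition $\R''$ and use the $\bar d$-closedness of Bernoulli processes twice (once for openness, once to transfer non-Bernoullicity from $\R$ to $\R''$); the payoff is that it generalizes to settings where a strictly ergodic generator is unavailable or awkward, which is precisely why the paper redoes the proof this way for amenable groups.

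Two repairs are needed in your sketch. First, Lemma 1 alone does not give you the tower you need: you must choose the base $B$ (essentially) independent of the partition of $X$ into typical and atypical $\R$-$N$-names. Otherwise the union of the badly decoded columns is $\bigcup_{0\le i<N} T^i(B\cap E)$, where $E$ is the atypical set, and its measure is $N\mu(B\cap E)$, which need not be small even though $\mu(E)$ is; the paper handles this by invoking the \emph{strong} Rokhlin lemma to make $B$ independent of $\cup\G$. Second, your count $K\approx 2N\delta^2/\log_2 r$ should use the alphabet size of $\PP$ (the partition being perturbed), not $r=|\R|$, since the code words are written in $\PP$-symbols; this only changes a constant. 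With these adjustments the proof goes through.
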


      \begin{proof}
               We are given a partition $\PP$ of $X$,
         $\epsilon > 0$ and our goal is to find a partition $\PP'$ that is within $\epsilon$ of $\PP$ and is not Bernoulli. We  apply Austin's theorem to
        find a non-Bernoulli factor $B_0$  with entropy  $h(X, \B_0, \mu, T) = h < \frac{\epsilon}{10}$.

         By a result of Manfred Denker \cite{D} there is a finite generating partition
          $\Q$ for $\B_0$ such that the subshift defined by $\Q$ is strictly ergodic and hence has
           topological entropy equal to $h$. This means that if we denote by  $\mathcal{C} \subset \bigvee_{i=0}^{n-1} T^{-i}\Q $ those atoms that have positive
           measure then for $n$ sufficiently large $|\mathcal{C}|  < 2 ^{\frac{\epsilon}{5}n}$.

        Fix now some $N$ such that $\frac{1}{N} < \frac{\epsilon}{10}$ and an  $n$ so that $\frac{N}{n} < \frac{\epsilon}{20}$.
        Let now $B_0, B_1$ be the bases of an $\{n, n+1\}$-KR tower.
        Partition the base $B_0$ by $  \bigvee_{i=0}^{n-1} T^{-i}\Q $ and $B_1$ by   $\bigvee_{i=0}^{n} T^{-i}\Q$. If $A \subset B_0$  is an atom of this partition
        we modify $\PP$ on the levels $\{ T^iA : 0 \leq i \leq n-1 \}$ as follows. The first $N$ levels are put in $P_0'$, the next level is put in $P_1'$, the next  $N$ levels  are put in $P_0'$ and then
        every $\frac{N}{2}$-th level is put in $P_1'$. Now let $\pi:\mathcal{C} \rightarrow \{0, 1 \}^{\frac{\epsilon}{5}n}$ be a one to one map and use this map to relabel
        the next $\frac{\epsilon}{5} n$ free levels (i.e. levels that haven't been modified in the first part of the argument) to encode the $\Q-n$-name of the atom $A$.
        For all other levels $T^iA$ we set $P_j' =P_j$.
        The columns above the base $B_1$ are
        treated similarly except that the $N$ is replaced by $2N$ so that we can distinguish between $B_0$ and $B_1$.

          Clearly  $\PP'$ differs from $\PP$ by less than $\delta$ and given $\PP'$-name of a point one can reconstruct uniquely the $\Q$-name so that the process defined by $\PP'$ is not
          Bernoulli. $\Box$
       \end{proof}


\section{Mixing properties}

     Recall the basic mixing properties of an ergodic system $\XBT$: weak mixing (WM), mild mixing (MM), strong mixing (SM)and  Kolmogorov processes (or, equivalently
     completely positive entropy (CPE) systems). It is straightforward to see that each of these properties
     descends to factors. It follows that if a system has one of these properties then all partitions of the space define processes that have the same property. What we shall show is, that if a system
     fails to have one of these properties then for an open dense set of partitions the process defined by the partition fails to have that property.

     The first thing to observe is that each of these mixing properties is closed in the d-bar metric. For K-processes this is exactly  (\cite{R} Exercise 7.5).
     In our situation, where all the processes are coming from partitions of a fixed system this is fairly easy to see and here is a sketch of the proof.

      We are given a sequence of finite partitions $\{ \PP_n \}$ that converge to $\PP$ such that each one defines a K-process. By lumping atoms, which preserves the K-property
      we may assume that the number of atoms of the $\PP_n$ is the same as that of $\PP$. We now use the Rokhlin metric on partitions given by $\rho( \Q , \R ) = H(\Q|\R) + H(\R |\Q)$.
      If $C$ is any set in the $\sigma$-algebra generated by $\PP$ our task is to show that the entropy of the partition  $ \mathcal{C} =\{C, X \setminus C \}$ is positive. Now for a small enough
      $\epsilon$ we find a $C_0 \subset \bigvee_{i= -N}^{+N} T^i\PP$ such that,
       setting $\mathcal{C}_0 = \{C_0 , X \setminus C_0 \}$
       $\rho( \mathcal{C} ,\mathcal{C}_0) < \epsilon$. Since the $\PP_n$ converge to $\PP$ if $n$ is sufficiently large
      $\rho( \bigvee_{i= -N}^{+N} T^i\PP_n , \bigvee_{i= -N}^{+N} T^i\PP) < \epsilon$.

      Letting $C_n$ be the subset of $\bigvee_{i= -N}^{+N} T^i\PP_n$  that corresponds to $C_0$ it follows that, setting $\mathcal{C}_n = \{C_n , X \setminus C_n \}$ we have
       $\rho( \mathcal{C}_n ,\mathcal{C}_0) < \epsilon$.

      Now the fact that $\PP_n$ defines a K-process implies that the process entropy $h( T^k ,\mathcal{C}_n)$ tends to $H(\mathcal{C}_n)$ as $k$ tends to infinity.
      Since process entropy is a Lipschitz function in the Rokhlin metric if $\epsilon$ is sufficiently  small this implies that process entropy of
      $\mathcal{C}$ is positive.

     In \cite{Sh}
     it is shown that both ergodicity (Theorem I.9.15) and mixing (Theorem I.9.17) are closed in the d-bar metric. That weak mixing is closed follows from the characterization
     of weak mixing as the ergodicity of the product system. We have not found a reference in the literature to the fact that mild mixing is closed in d-bar. This is not so obvious from the usual
     definition of mild mixing as the absence  of non-trivial rigid factor. However, there is an equivalent definition using the notion of IP*-convergence which can be found in (\cite{F} Proposition  9.22).
     With this definition it is quite straightforward to extend the proof of the closure of mixing in \cite{Sh} to mild mixing. Jean-Paul Thouvenot pointed out that yet another proof can be based on the
     characterization of mild-mixing as being disjoint from all rigid transformations. This characterization can be found in (\cite{G1} Corollary 8.16).

     The next observation is that the failure of any one of these properties is witnessed by the existence of some special factor that has zero entropy. For weak mixing this is the Kronecker factor.
     For mild mixing this is a rigid factor which necessarily has zero entropy. For non-K processes this is the Pinsker factor. These are all well known. For non-mixing this is a
     result of Fran\c{c}ois Parreau \cite{P}. He showed that if a system is not mixing then it has a factor which is disjoint from all mixing transformations. As such it must have zero entropy.
     An exposition of Parreau's theorem was given by Eli Glasner  (see \cite{G2}).

     Our result will now follow from the next two propositions.
         \begin{proposition}\label{Gdelta}
         If $\Q$ is any fixed partition then the set of partitions
             $\PP$ such that $ \Q \subset \bigvee_{-\infty < i <+\infty}T^i\PP$ is a  $G_{\delta}$.
         \end{proposition}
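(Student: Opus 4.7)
The plan is to express the set $\{\PP : \Q \subset \bigvee_{i \in \bbZ} T^i\PP\}$ as a countable intersection of open sets by writing ``$\Q$ lies in the $\sigma$-algebra generated by $\PP$ under $T$'' as the conjunction, over all $k\in\bbN$, of the condition ``each atom of $\Q$ can be approximated in measure to within $1/k$ by a set measurable with respect to $\bigvee_{i=-n}^{n} T^i\PP$ for some $n$.''

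Fix an enumeration $Q_1,\ldots,Q_m$ of the atoms of $\Q$ and work inside the clopen subspace of partitions with a fixed number $r$ of labels (the full partition space is a disjoint union of such). For $k,n\geq 1$, each word $s\in\{1,\ldots,r\}^{2n+1}$ indexes the atom
$$A_s(\PP) \;=\; \bigcap_{j=-n}^{n} T^{-j}P_{s_j}$$
of $\bigvee_{i=-n}^n T^i\PP$, and I define
$$V_{k,n} \;=\; \Big\{\PP : \forall j\leq m\; \exists S_j \subset \{1,\ldots,r\}^{2n+1}\text{ with } \mu\Big(Q_j \triangle \bigcup_{s\in S_j} A_s(\PP)\Big) < \tfrac{1}{k}\Big\},$$
and $U_k=\bigcup_n V_{k,n}$.

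First I would verify that each $V_{k,n}$ is open. For a fixed tuple $(S_1,\ldots,S_m)$, the map $\PP\mapsto \mu(Q_j\triangle\bigcup_{s\in S_j}A_s(\PP))$ is continuous (in fact Lipschitz) in the partition metric: intersection is $1$-Lipschitz in symmetric difference, and symmetric difference and finite union are too, so small perturbation of $\PP$ produces small perturbation of each $A_s(\PP)$, hence of the finite union. Since there are only finitely many choices of $(S_1,\ldots,S_m)$, $V_{k,n}$ is a finite union of open sets, and consequently $U_k$ is open.

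Finally I would verify the equality $\{\PP : \Q \subset \bigvee_{i\in\bbZ} T^i\PP\} = \bigcap_k U_k$. If $\PP$ generates $\Q$ in this sense, then each $Q_j$ belongs to the $\sigma$-algebra generated by the increasing union of the finite algebras $\bigvee_{i=-n}^n T^i\PP$, so by the standard measure-theoretic approximation it is within $1/k$ in measure of some member of such a finite algebra for some $n$, placing $\PP$ in $V_{k,n}\subset U_k$. Conversely, if $\PP \in \bigcap_k U_k$, then each $Q_j$ is the limit in measure of sets from $\bigcup_n \bigvee_{i=-n}^n T^i\PP$, and therefore lies in $\bigvee_{i\in\bbZ} T^i\PP$. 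There is no real obstacle here; the only point worth checking carefully is the Lipschitz dependence of $A_s(\PP)$ on $\PP$, which amounts to the elementary inequality $\mu((A\cap B)\triangle(A'\cap B'))\leq \mu(A\triangle A')+\mu(B\triangle B')$ iterated $2n+1$ times.
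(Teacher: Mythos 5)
Your proposal is correct and follows essentially the same route as the paper: the paper defines $U(k,n)$ as the set of partitions $\PP$ admitting a $\Q_0\subset\bigvee_{i=-n}^{n}T^i\PP$ with $d(\Q_0,\Q)<\tfrac1k$, observes these are open, and takes $\bigcap_k\bigcup_n U(k,n)$, which is exactly your $V_{k,n}$ and $U_k$ written out with the atoms made explicit. Your verification of the Lipschitz dependence of the atoms on $\PP$ and of the two inclusions simply supplies the details the paper leaves as ``clearly.''
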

         \begin{proof}
         Define for  fixed $k, n$ the set:

             $$ U(k,n) = \{ \PP : \hspace{0.2cm} \exists \Q_0 \subset \bigvee_{i=-n}^{n}T^i\PP \hspace{0.2cm} \textrm{such that} \hspace{0.2cm} d(\Q_0, \Q) < \frac{1}{k} \}. $$

           These sets are clearly open and if $\PP \in \bigcap_{1 < k < \infty} \bigcup_n U(k,n)$ then $\PP$ generates a factor containing $\Q$. $\Box$
         \end{proof}

         \begin{proposition}\label{modify}  Let $\XBT$ be an ergodic system and $\PP = \{P_0, P_1, ...P_a \}$ be a partition of $X$. Suppose that $\B_0$ is  a factor sub-$\sigma$-algebra. If the factor  has zero entropy then
         there exists a finite generating partition $\Q$ for the factor $\B_0$  such that
          for any  $0 < \epsilon$, there is a partition $\PP'$ satisfying:
          \begin{enumerate}
          \item $d(\PP, \PP') < \epsilon$, and
          \item $\Q \subset \bigvee_{-\infty}^{\infty}T^{-i}\PP'$.
       \end{enumerate}
          \end{proposition}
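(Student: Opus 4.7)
The plan is to imitate the construction used in the proof of Theorem~\ref{nonB}, adapted so that the encoded data recovers the full $\Q$-name rather than merely witnessing non-Bernoullicity. First, since $\B_0$ has zero entropy, I would invoke Denker's theorem to produce a finite generating partition $\Q$ for $\B_0$ whose induced symbolic factor is strictly ergodic, hence has topological entropy equal to $h(\B_0)=0$. Consequently, for every $\delta>0$ and all sufficiently large $n$, the number $|\mathcal{C}_n|$ of admissible $\Q$-$n$-names satisfies $|\mathcal{C}_n|<2^{\delta n}$.

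Given $\epsilon>0$, I would next choose $N$ with $1/N<\epsilon/10$, then $n\gg N$ with $N/n<\epsilon/20$ and $|\mathcal{C}_n|<2^{\epsilon n/5}$, and fix a one-to-one map $\pi:\mathcal{C}_n\to\{0,1\}^{\lceil\epsilon n/5\rceil}$. I may assume $\PP$ has at least two atoms $P_0,P_1$ (otherwise $\B_0$ is trivial and the statement is vacuous). Using the $\{n,n+1\}$-KR construction from Lemma~1, build a tower with bases $B_0,B_1$ exhausting $X$, and partition each base according to the $\Q$-$n$-name, respectively $\Q$-$(n+1)$-name, of its points, so that each atom $A$ of the base partition indexes one column.

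In each column above an atom $A$ of the base partition, I would modify $\PP$ on a small subset of levels as follows: at the bottom place a distinguishing marker block $0^N10^N$ for columns over $B_0$ and $0^{2N}10^{2N}$ for columns over $B_1$; then along the column place a $1$ at every level that is a multiple of $N/2$ from the bottom, which prevents the marker pattern $0^N$ from ever reappearing inside the column; then in the next $\lceil\epsilon n/5\rceil$ ``free'' levels (levels not used by either the marker or the $N/2$-grid) write down the binary string $\pi(A)$, placing the bit $b_j$ in the cell $P_{b_j}'$; leave all remaining levels unchanged from $\PP$. The fraction of altered levels per column is at most $(2N+1)/n + 2/N + \epsilon/5 + o(1) < \epsilon$, so $d(\PP,\PP')<\epsilon$, giving~(1).

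For~(2), I would verify that the $\PP'$-name of $\mu$-a.e. point determines the $\Q$-name. By the uniqueness of the marker pattern (ensured by the $N/2$-grid filler), the positions of $B_0$ and $B_1$ are measurable with respect to $\bigvee_{-\infty}^{\infty}T^{-i}\PP'$; once the base positions are identified, one recovers the encoded string and decodes $\pi^{-1}$ to obtain the $\Q$-$n$- or $\Q$-$(n+1)$-name of the atom over which each point sits, which determines the $\Q$-label of every level of the tower and hence of $\mu$-a.e.\ point. The main obstacle is precisely this uniqueness of the marker and the consistency of the auxiliary $N/2$-grid with the encoding slots; this is the same bookkeeping handled in Theorem~\ref{nonB}, and can be arranged because there are $n-O(N)-O(1)$ free levels available to host the $\lceil\epsilon n/5\rceil$ encoding bits without overlapping the grid, thanks to our choice $N/n<\epsilon/20$.
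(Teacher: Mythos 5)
Your proposal is correct and follows essentially the same route as the paper, which simply refers back to the construction in the proof of Theorem~\ref{nonB}: Denker's strictly ergodic generator for the zero-entropy factor, an $\{n,n+1\}$-KR tower, marker blocks with the $N/2$-grid filler to locate the bases, and an encoding of the $\Q$-$n$-names into a small fraction of free levels. The only quibble is the parenthetical claim that $\B_0$ would be trivial if $\PP$ had one atom (it need not be), but this is moot since $\PP=\{P_0,P_1,\dots,P_a\}$ always supplies at least two labels for the coding.
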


     \begin{proof}
       The proof is essentially the same as the proof of Theorem \ref{nonB}. The partition $\Q$ in the assertion is the one we get from Denker's result.
     $\Box$
          \end{proof}


          The following theorem summarizes what we have done.

          \begin{theorem}\label{general} Let $\XBT$ be an ergodic system.
          \begin{enumerate}
          \item If the system is not weakly mixing then the set of partitions $\PP$ such that the process they define is not weakly mixing is
          both dense and open. In addition, the set of partitions that generate a factor containing the Kronecker factor is a dense $G_{\delta}$.

          \item If the system is not mildly mixing then the set of partitions $\PP$ such that the process they define is not mildly mixing is
          both dense and open. In addition, for any rigid factor $\B_0$ the set of partitions that generate a factor containing $\B_0$ is a dense  $G_{\delta}$.

          \item If the system is not  mixing then the set of partitions $\PP$ such that the process they define is not mixing is
          both dense and open. In addition, for any Parreau factor $\B_0$ the set of partitions that generate a factor containing $\B_0$ is a dense  $G_{\delta}$.

          \item If the system is not Kolmogorov then the set of partitions $\PP$ such that the process they define is not Kolmogorov is
          both dense and open. In addition, the set of partitions that generate a factor containing the Pinsker algebra is a  dense  $G_{\delta}$.
          \end{enumerate}
          \end{theorem}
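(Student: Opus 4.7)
The plan is to combine three ingredients already assembled earlier in the paper. First, each of WM, MM, SM, and K is closed in the d-bar metric on processes, as sketched in the discussion preceding the theorem. Second, the failure of each of these four properties is witnessed by a nontrivial zero-entropy factor $\B_0$ of an appropriate type: the Kronecker factor for WM, a rigid factor for MM, a Parreau factor for SM (via Parreau's theorem \cite{P}), and the Pinsker algebra for K. Third, each of these mixing properties descends to factors, so if a partition $\PP'$ generates a factor that contains $\B_0$, then the process it defines inherits from $\B_0$ the failure of the corresponding property.

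Granting these, the openness of the \textbf{bad} set in each case is immediate: the map from $\PP$ to the process it defines is a contraction from the partition metric to d-bar, and the d-bar-closed set of processes that do have property P pulls back to a closed set of partitions, whose complement is open.

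For density, fix the witness factor $\B_0$ and apply Denker's theorem (as in the proof of Theorem \ref{nonB}) to get a finite generator $\Q$ for $\B_0$ whose strictly ergodic model has topological entropy equal to $h(\B_0)$. Proposition \ref{modify} then says that for any $\PP$ and any $\epsilon > 0$ there is a partition $\PP'$ within $\epsilon$ of $\PP$ such that $\Q \subset \bigvee_i T^{-i}\PP'$, so the factor generated by $\PP'$ contains $\B_0$. By the hereditary step, $\PP'$ defines a process failing the mixing property under consideration, which gives density of the bad set. Proposition \ref{Gdelta}, applied to the same $\Q$, shows that the set of partitions generating a factor containing $\B_0$ is a $G_\delta$; combined with the density just obtained, this yields the dense $G_\delta$ assertion in each item.

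The main obstacle I anticipate is verifying the hereditary statement in the third case, since a Parreau factor is characterized externally (disjointness from all mixing transformations) rather than by an internal obstruction like rotation, rigidity, or zero entropy. There one argues simply that strong mixing descends to factors, so an extension containing the non-mixing $\B_0$ cannot itself be mixing. The other three heredities are standard: a system with a nontrivial Kronecker factor is not WM, a system with a nontrivial rigid factor is not MM, and a system with a nontrivial zero-entropy factor is not K. Once these are in place, all four parts of the theorem follow uniformly from Propositions \ref{Gdelta} and \ref{modify} together with the closedness of each mixing property in d-bar.
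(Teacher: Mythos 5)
Your proposal is correct and follows essentially the same route as the paper: openness of the bad set from the d-bar closedness of each mixing property, density from Proposition \ref{modify} applied to a generator of the zero-entropy witness factor (Kronecker, rigid, Parreau, Pinsker), and the $G_\delta$ statement from Proposition \ref{Gdelta}. The paper's own proof is just a two-line citation of these same ingredients, so your write-up is in effect an expanded version of it, including the correct handling of the Parreau case via heredity of strong mixing.
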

          \begin{proof}
            The fact that the various non-mixing partitions form open sets was discussed above. Proposition \ref{modify} shows that they are dense. For the second part we use in addition
            Proposition \ref{Gdelta}.
             $\Box$
          \end{proof}

        \begin{remark}\label{generators} It follows from the theorem that for a zero entropy system for a generic collection of subsets $A$ the partition $\{A, X \setminus A \}$ is a generator.
        For positive entropy this of course cannot be true. However if $\XBT$ has finite positive entropy $h$ and we fix any full entropy factor defined by an invariant $\sigma$-algebra $\B_0$ then a generic partition $\PP$  together with $\B_0$  generates $\B$.  (see the next section).

        \end{remark}

\section{The relative theory}

    When the system $\XBT$ has a factor $\YCS$, i.e there is a measurable mapping $\pi:X \rightarrow Y$ such that $\pi \circ \mu = \nu$ and $\pi T = S \pi $ there
    are well studied notions of relative weak-mixing (RWM), relative Kolmogorov (RK) and relative Bernouli (RB). It will be convenient to identify the factor
    with $\pi^{-1}\mathcal{C}$ which is a $T$-invariant sub-$\sigma$-algebra of $B$, and henceforth $\mathcal{C}$ will be used to denote this.  For each of these properties the analogue of
    our previous results would be that if the extension $\XBT$ fails to have that property relative to the factor $\YCS$ then the process defined by a generic partition $\PP$
    of $X$ also fails to have that property relative to $\YCS$. What we mean by that is that system formed by joining $\bigvee_{ -\infty <i< +\infty}T^i\PP$ to $\mathcal{C}$
    the resulting extension of $\YCS$ fails to have that property relative to $\YCS$.

      As before, the first thing to check would be that the partitions that define processes that do satisfy one of these relative properties form a closed subset
      in the space of partitions. For relative weak mixing this follows easily from the definition RWM as the ergodicity of the relative product of $\XBT$ with itself over the factor $\YCS$.
       For relative Bernoulli this follows from Prop. 7 in Jean-Paul's foundational work on the relative Bernoulli theory \cite{T}.

       For relative K a brief proof can be given based on the following characterization of relative K which can be found in \cite{GTW1}.
        An extension $\XBT$ of $\YCS$ is relatively K if and only if it is relatively disjoint (over $\YCS$ ) from every extension that has
        relative entropy equal to zero.

    What is needed in order to prove that the relevant open set is dense is a relative version of Proposition \ref{modify}. However, since we are not aware of a relative analogue
    of the result of Denker that was used in the proof of that proposition we will have to modify
     the proof.

      \begin{proposition}\label{relmodify}  Let $\XBT$ be an ergodic system which is an extension of $\YCS$ by the factor map $\pi$, and let $\PP = \{P_0, P_1, ...P_a \}$ be a partition of $X$.
      Suppose that $\Q$ is a partition of $X$ such that the process defined by $\Q$  has zero entropy relative to $\pi^{-1}(\mathcal{C})$. Then
          for any  $0 < \epsilon$ and $0 < \delta$ , there are partitions $\PP'$ and $\Q_0$ satisfying:
          \begin{enumerate}
          \item $d(\PP, \PP') < \epsilon$,
          \item $d(\Q_0, \Q) < \delta$, and
          \item $\Q_0 \subset \pi^{-1}(\mathcal{C}) \bigvee_{-\infty}^{\infty}T^{-i}\PP'$.
       \end{enumerate}
          \end{proposition}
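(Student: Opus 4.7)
The plan is to emulate the proof of Proposition~\ref{modify}, substituting for Denker's strictly ergodic model theorem the relative Shannon--McMillan theorem applied to $\Q$ over $\pi^{-1}(\mathcal{C})$. The crucial observation is that the target $\sigma$-algebra already contains $\pi^{-1}(\mathcal{C})$, so we may freely assume $\mathcal{C}$ is known when reconstructing $\Q_0$; what needs to be packed into the $\PP'$-name is therefore only the $\Q$-name \emph{conditional} on the full $\mathcal{C}$-information.

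Since $h(\Q \mid \pi^{-1}(\mathcal{C})) = 0$, for any $\eta > 0$ and all sufficiently large $n$ the relative Shannon--McMillan theorem furnishes a measurable set $E_n \subset X$ with $\mu(E_n) > 1 - \delta/3$ together with, for each $y \in Y$, a measurably chosen list $G_n(y)$ of length-$n$ $\Q$-names with $|G_n(y)| \le 2^{\eta n}$, such that every $x \in E_n$ has its $\Q$-name $q(x) = (\Q(x), \Q(Tx), \dots, \Q(T^{n-1} x))$ lying in $G_n(\pi x)$. I would then fix $N$ with $1/N$ small, choose $n$ so large that $(10N + \lceil \eta n \rceil)/n < \epsilon$, and build an $\{n, n+1\}$-KR tower with bases $B_0, B_1$. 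The modification of $\PP$ on each column proceeds exactly as in the proof of Proposition~\ref{modify}: I place synchronisation markers of shape $0^N 1 0^N$ (respectively $0^{2N} 1 0^{2N}$) at the top of each column over $B_0$ (respectively $B_1$) so that the two base types are uniquely locatable from the $\PP'$-name, and then use the next $\lceil \eta n \rceil$ free levels of the column to write out in binary the index of $q(b)$ within the enumeration of $G_n(\pi b)$, where $b$ is the base of the column. On columns whose base lies outside $E_n$, or in case $q(b) \notin G_n(\pi b)$, leave $\PP$ unchanged.

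Given the $\PP'$-name together with $\pi^{-1}(\mathcal{C})$, the markers identify the nearest tower base, $\mathcal{C}$ pins down $y = \pi(b)$ and hence both the list $G_n(y)$ and its enumeration, and the $\lceil \eta n \rceil$ encoded levels then recover the index and thus $q(b)$ itself. Setting $\Q_0(x) = \Q(x)$ wherever the encoding succeeds and defining $\Q_0$ arbitrarily (say as $P_0$) elsewhere yields a partition satisfying $\Q_0 \subset \pi^{-1}(\mathcal{C}) \vee \bigvee_i T^{-i} \PP'$. The ``bad'' set has measure less than $\delta$ by construction, so $d(\Q_0, \Q) < \delta$, while the fraction of levels in each column altered from $\PP$ is at most $(10N + \lceil \eta n \rceil)/n < \epsilon$, giving $d(\PP, \PP') < \epsilon$. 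The main obstacle, the absence of a relative Denker theorem, is bypassed precisely by this shift from a deterministic count of legal cylinder words in a strictly ergodic subshift to a measurable, $y$-dependent list of high-conditional-probability $\Q$-names; producing such a measurable selection from the disintegration $\{\mu_y\}$ of $\mu$ over $\pi^{-1}(\mathcal{C})$ is routine once one takes $G_n(y)$ to consist of all $n$-names whose conditional mass exceeds $2^{-\eta n}$, whose cardinality bound then follows from the pointwise relative Shannon--McMillan theorem.
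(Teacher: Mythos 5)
Your proposal is correct and follows essentially the same route as the paper, whose own proof is only a sketch: it likewise invokes the relative Shannon--McMillan theorem in place of Denker's theorem, carries out the coding fiberwise over $\pi^{-1}(\mathcal{C})$, and absorbs the non-uniformity of the fiber covers into the extra $\delta$-error that forces one to produce $\Q_0$ close to $\Q$ rather than $\Q$ itself. Your observation that the decoder already has access to $\pi^{-1}(\mathcal{C})$, so only the index within the $y$-dependent list $G_n(y)$ need be encoded, is precisely the point the paper leaves to the reader.
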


     \begin{proof}

          The proof is similar to the proof of Proposition \ref{modify} except that instead of the usual Shannon-McMillan theorem we need to use the relative
          Shannon-McMillan theorem (see \cite{KW} for an appropriate version) and carry out the construction on each fiber $\pi^{-1}(y)$. In addition since we don't have any uniformity for the partition
          $\Q$ we have another error since the fact that $\Q$ has relative zero entropy only means that on typical fibers we can cover only most of the fiber by an
          exponentially small number of atoms from $\bigvee_{i=0}^{n-1}T^{-i}\Q$.

          We leave this routine modification  to the reader.    $\Box$
          \end{proof}

    \begin{remark}\label{relpositive}  A slightly more careful argument will show that the assumption of relative zero entropy for $\Q$  can be replaced by the assumption that its
    relative entropy $h$ satisfies $h < \epsilon$.
          \end{remark}
      We can now state the main result of this section.

       \begin{theorem}\label{relgeneral} Let $\XBT$ be an ergodic system with a factor defined by $\mathcal{C} \subset \B$. \\

          \textbf{(i) }If the system is not  weakly mixing relative to $\mathcal{C}$ then the set of partitions $\PP$ such that the process they define is not weakly mixing
          relative to $\mathcal{C}$  is
          both dense and open. \\

          \textbf{(ii)} If the system is not Kolmogorov relative to $\mathcal{C}$ then the set of partitions $\PP$ such that the process they define is not Kolmogorov
          relative to $\mathcal{C}$ is
          both dense and open. \\

           \textbf{(iii)}If the system is not Bernoulli relative to $\mathcal{C}$ then the set of partitions $\PP$ such that the process they define is not Bernoulli
          relative to $\mathcal{C}$ is both dense and open.

          \end{theorem}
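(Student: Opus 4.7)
The plan is to mirror the proof of Theorem \ref{general}, substituting Proposition \ref{relmodify} (together with Remark \ref{relpositive}) for Proposition \ref{modify}. The openness half of each of (i), (ii), (iii) is already disposed of in the discussion preceding Proposition \ref{relmodify}: ergodicity of the relative product handles (i), Proposition 7 of \cite{T} handles (iii), and the disjointness characterization from \cite{GTW1} handles (ii). So in each part only the denseness of the open set needs to be established.

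Given $\PP$ and $\epsilon > 0$, the strategy in each case is the same: produce a witness partition $\Q$ of small (ideally zero) relative entropy over $\mathcal{C}$ such that $\mathcal{C} \bigvee_{-\infty}^{\infty} T^{-i} \Q$ fails the relevant relative mixing property over $\mathcal{C}$; then apply Proposition \ref{relmodify} to obtain $\PP'$ with $d(\PP, \PP') < \epsilon$ and $\Q_0$ with $d(\Q_0, \Q) < \delta$ satisfying $\Q_0 \subset \mathcal{C} \bigvee_{-\infty}^{\infty} T^{-i} \PP'$. For $\delta$ sufficiently small, openness of the non-mixing set in d-bar ensures that $\mathcal{C} \bigvee_{-\infty}^{\infty} T^{-i} \Q_0$ still fails the property; and since each relative mixing property descends to intermediate factors, the larger $\sigma$-algebra $\mathcal{C} \bigvee_{-\infty}^{\infty} T^{-i} \PP'$ also fails it. Thus $\PP'$ lies in the desired open set.

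The witness $\Q$ is produced differently in each part. In (i), failure of relative weak mixing over $\mathcal{C}$ yields, by Furstenberg's relative structure theory, a non-trivial intermediate factor that is relatively compact over $\mathcal{C}$; any finite partition $\Q$ generating a strict extension of $\mathcal{C}$ inside this factor has zero relative entropy and defines a non-RWM extension of $\mathcal{C}$, so Proposition \ref{relmodify} applies directly. In (ii), the relative Pinsker algebra $\Pi_{\mathcal{C}}$ strictly contains $\mathcal{C}$, and any finite partition $\Q \subset \Pi_{\mathcal{C}}$ not contained in $\mathcal{C}$ generates a non-trivial intermediate factor of zero relative entropy, which is in particular not RK over $\mathcal{C}$, and again Proposition \ref{relmodify} applies directly.

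Case (iii) is the main obstacle. Failure of relative Bernoulli over $\mathcal{C}$ is not witnessed by any obvious zero-relative-entropy subfactor, so we must use a positive but small relative entropy witness, which is precisely what Remark \ref{relpositive} permits. Here I would invoke a relative version of Austin's weak-Pinsker theorem: decompose $\B$ over $\mathcal{C}$ as $\B = \mathcal{D} \vee \E$ with $\mathcal{D}$ relatively Bernoulli over $\mathcal{C}$, $\E$ of relative entropy below $\epsilon/10$, and $\mathcal{D}, \E$ relatively independent over $\mathcal{C}$. The factor $\E$ cannot be RB over $\mathcal{C}$, for otherwise $\B$ itself would be. A finite partition $\Q$ generating $\E$ over $\mathcal{C}$ is then the desired small-relative-entropy witness, and Proposition \ref{relmodify} in the form of Remark \ref{relpositive} completes the argument. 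The reliance on a relative weak Pinsker statement, i.e.\ the relative analogue of \cite{A}, is the subtle point; it is exactly the reason Remark \ref{relpositive} was stated.
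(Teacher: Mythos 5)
Your proposal is correct and follows essentially the same route as the paper: openness is handled by the closure results already discussed, denseness in (i) and (ii) comes from a zero-relative-entropy witness factor (a non-trivial relatively compact/distal intermediate extension, resp.\ the relative Pinsker algebra) fed into Proposition \ref{relmodify}, and (iii) uses the relative weak Pinsker property together with Remark \ref{relpositive}. The only cosmetic difference is that you take an arbitrary finite partition generating a non-trivial piece of the witness factor rather than invoking the relative finite generator theorem to generate all of it, which works equally well since both relative compactness and zero relative entropy pass to intermediate factors.
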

          \begin{proof} \\

          \textbf{(i)} If the extension is not relatively weakly mixing there is a non-trivial intermediate factor which is a distal extension of $\YCS$. This distal extension has
          zero entropy relative to $\YCS$. By the relative finite generator theorem (see \cite{KW} and \cite{DP}) there is a finite partition $\Q$ that generates this distal extension
          over $\mathcal{C}$ and has zero entropy relative to $\mathcal{C}$. By the openness of the set of partitions that generate non RWM extensions, there
          is some positive $\delta$ such that if $\Q_0$ is within $\delta$ of $\Q$ then $\Q_0$ also generates a non RWM extension. Given an arbitrary partition $\PP$
          and an arbitrary $\epsilon$ we can apply Proposition \ref{relmodify} to find a $\PP'$ within $\epsilon$ of $\PP$ and a $\Q_0$ within $\delta$ of $\Q$ such that
          the extension generated by $\PP'$ contains $\Q_0$ and is therefore non RWM. This shows that the set is also dense as required. \\

          \textbf{(ii)} This proof is the same as case (i) because if the extension is not Kolmogorov there is a non-trivial intermediate factor with zero entropy. \\

          \textbf{(iii)} In this case we must begin with the partition $\PP$ and a positive $\epsilon$. By the relative weak-Pinsker property ( see \cite{A}) there is an
          intermediate extension $\mathcal{C'}$ with relative entropy $ h < \epsilon$ such that $\XBT$ is relatively Bernoulli over  $\mathcal{C'}$. We can now apply
          Remark \ref{relpositive} and conclude the proof as in case (i). $\Box$
         \end{proof}

\section{Amenable groups}

     There is a well developed theory of entropy and Bernoulli systems for arbitrary countable amenable groups $G$ (see \cite{OW} )and Theorem \ref{nonB} makes good sense for any such group. In this section we shall extend
     Theorem \ref{nonB} to this setting. For the first part, namely that the partitions that define a Bernoulli process form a closed set, we use the fact that the Bernoulli processes are characterized by
     being Finitely Determined, just as in the case of $\bbZ$. It follows from  (\cite{OW} III. Theorem 4) that these processes are closed under d-bar limits and thus the non-Bernoulli
     partitions form an open set.

      In the proof of the theorem for $\bbZ$ we used strictly ergodic partitions. While there is an extension of this result to
     amenable groups by Alain Rosenthal \cite{Ro} we prefer to give a proof which does not rely on this. For clarity we will first describe this alternative proof for $\bbZ$.

         Once again we start with a partition $\PP = \{P_0, P_1, ...P_a \}$ and an $\epsilon >0$.
         Next by the weak Pinsker property we can find some non Bernoulli factor that has entropy  equal to $h < \frac{\epsilon}{2}$. By Krieger's generator theorem
          this factor has a finite generating partition $\Q$, and this $\Q$ is of course non Bernoulli.
         Since the non Bernoulli partitions are open there is some $\delta > 0$ such that any partition that is within $\delta$ of $\Q$
         is also non Bernoulli. We will now show how to modify $\PP$ by less than $\epsilon$ to obtain a $\PP'$ in such a way that the process generated by $\PP'$ will have as a factor
         a partition $\Q'$ with $d(\Q, \Q') < \delta$. This will show that $\PP'$ is non Bernoulli and $d(\PP, \PP') < \epsilon$ which concludes the proof.

           By the Shannon-McMillan theorem if $n$ is sufficiently large there is a collection $\G$ of atoms of $\bigvee_{i=0}^{n-1} T^{-i}\Q$ such that:
          (i) $\mu(\cup \G ) > 1-\frac{\delta}{10}$ ; and (ii) $|\G| < 2^{ \frac{\epsilon}{2}n}$. Choose now some $N$ such that $\frac{1}{N} < \frac{\epsilon}{10}$ and
          demand also that $n$ is large enough so that $\frac{N}{n} < \frac{\epsilon}{10}$. Apply now the strong Rokhlin lemma to find a set $B$ that is independent
          of $\cup \G$ and such that:(i) the sets $\{T^iB\}_{i=0}^{n-1}$ are disjoint; and (ii) $\mu (\cup_{i=0}^{n-1}T^{-i}B) > 1 - \frac{\delta}{10}$.

             In order to be able to identify the base $B$ we first change $\PP$ on the first $2N+1$ levels of the tower above $B$ by putting all these levels, except for level $N+1$ into the new $P'_0$.  Then
           every $\frac{N}{2}$-th level is put into the new $P'_1$. In addition all points not in $\cup_{i=0}^{n-1}T^{-i}B)$ are also placed in
            $P'_1$. Finally we consider the partition of the tower induced by the partition of the base according to the atoms of $\bigvee_{i=0}^{n-1} T^{-i}\Q$. To the atoms that are
            in $\G$ we can assign distinct elements  $w \in \{0,1\}^{\frac{\epsilon}{2}n}$ and use these elements to place those levels beyond $2N$ that weren't already placed in $P'_1$
       into $P'_0$ or $P'_1$ according to the labels in $w$. For the atoms that are not in $\cup\G$ we reserve a single element $w$ that is not used in the correspondence with $\G$.

              For all other points in the tower $P'_0 = P_0$ and $P'_1=P_1$. It is clear that $d(\PP, \PP') < \epsilon$ and the process defined by $\PP'$ encodes a partition $\Q'$ that is within
             $\delta$ of $\Q$ so that $\PP'$ is non Bernoulli.

           The argument above establishes the following proposition;

           \begin{proposition}\label{modify-new}  Let $\XBT$ be an ergodic system and $\PP$, $\Q$ a pair of partitions. If the entropy $h$ of the process defined by $\Q$ satisfies
           $h < \epsilon$ then for any $\delta > 0$ there are $\PP'$ , $\Q'$ such that:
          \begin{enumerate}
          \item $d(\PP, \PP') < \epsilon$,
          \item $\Q' \subset \bigvee_{-\infty}^{\infty}T^{-i}\PP'$, and
          \item $d (\Q, \Q') < \delta$.
          \end{enumerate}
          \end{proposition}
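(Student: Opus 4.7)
The plan is to formalize the coding construction that was just carried out informally above, adjusting only the parameters to accommodate a general $\Q$ whose process entropy is $h<\epsilon$ (rather than a subfactor generator coming from Denker's theorem). The key point is that nothing in the previous argument actually used strict ergodicity of the subshift defined by $\Q$; it used only the Shannon--McMillan control on the number of typical $\Q$-names.

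First I would fix an $N$ with $1/N<\epsilon/10$, and then choose $n$ large enough that $N/n<\epsilon/10$ and, by Shannon--McMillan applied to the $\Q$-process, there is a collection $\G\subset\bigvee_{i=0}^{n-1}T^{-i}\Q$ with $\mu(\bigcup\G)>1-\delta/10$ and $|\G|<2^{(h+\eta)n}$ for an $\eta>0$ chosen so that $h+\eta<\epsilon$ with room to spare for the marker overhead. Next, apply the strong Rokhlin lemma to get a set $B$ approximately independent of $\bigvee_{i=0}^{n-1}T^{-i}\Q$ with $\{T^iB\}_{i=0}^{n-1}$ disjoint and $\mu(\bigcup_{i=0}^{n-1}T^iB)>1-\delta/10$.

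Now I would define $\PP'$ on the tower exactly as in the preceding argument. On the first $2N+1$ levels above $B$ place everything in $P_0'$ except level $N+1$ which goes in $P_1'$; this creates the recognizable marker $0^N10^N$ at every base. Then put every $\lfloor N/2\rfloor$-th level in $P_1'$ to form a regular grid that prevents the marker word from appearing spuriously, and send all points outside the tower to $P_1'$ (the extra error here is absorbed into the $\delta/10$ budget). In the remaining $\approx (h+\eta)n$ reserved free levels of each column, fix an injection $w\colon\G\to\{0,1\}^{(h+\eta)n}$ together with one distinguished codeword reserved for atoms in $\G^c$, and use $w(C)$ (or the reserved codeword) to set $\PP'$ on those levels according to the atom $C$ of $\bigvee_{i=0}^{n-1}T^{-i}\Q$ in which the base of the column lies. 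Elsewhere set $\PP'=\PP$. Define $\Q'$ on each column by decoding the codeword: on columns whose base is in some $C\in\G$ set $\Q'=\Q$ on that $C$, and on the remaining (small measure) columns fix $\Q'$ arbitrarily.

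Verification then splits into three points. For (1), the total modified mass is bounded by the marker overhead $O(1/N)<\epsilon/5$, the coding overhead $\le h+\eta<\epsilon-\epsilon/5$, and the exceptional out-of-tower mass $<\delta/10$, which together are less than $\epsilon$. For (2), the marker $0^N10^N$ together with the grid of $1$'s identifies the base $B$ from the $\PP'$-name of a generic point, after which the codeword is visible in the reserved levels and the injectivity of $w$ lets us recover $C$; hence $\Q'\subset\bigvee_{-\infty}^{\infty}T^{-i}\PP'$. For (3), $\Q$ and $\Q'$ agree on the union of columns whose base is in $\bigcup\G$ and which lie inside the tower, a set of measure $>1-\delta$ by our Shannon--McMillan and Rokhlin choices. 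The main obstacle is purely bookkeeping: making sure that the marker overhead, the encoding overhead, and the Rokhlin/Shannon--McMillan exceptional sets all fit jointly inside the $\epsilon$ and $\delta$ budgets; this is the reason for the hierarchical choice $\delta,\eta\ll 1/N\ll N/n$.
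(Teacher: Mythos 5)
Your proposal is correct and follows essentially the same route as the paper: a Shannon--McMillan bound on the number of typical $\Q$-$n$-names, a strong Rokhlin tower whose base is independent of those names, a $0^N10^N$ marker plus a sparse grid of $1$'s to locate the base, and an injective binary coding of the atoms of $\bigvee_{i=0}^{n-1}T^{-i}\Q$ on the reserved free levels, with $\Q'$ defined by decoding. The paper's proof of this proposition is exactly the informal argument preceding its statement, and your remark that only Shannon--McMillan (not strict ergodicity) is needed is precisely the point the paper makes in introducing this alternative construction.
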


             As we have seen the proof of Theorem \ref{nonB} is quite straightforward assuming the above proposition. Now using the theory developed in \cite{OW} it is a routine matter to extend
             the proposition to arbitrary countable amenable groups. There are two basic ingredients that were used. The Shannon-McMillan theorem and the strong Rokhlin lemma. The Shannon-McMillan
             theorem is valid for an arbitrary sequence of F{\o}lner sets and requires no special adaptation. For the strong Rokhlin lemma there is a version using many F{\o}lner sets
             (see Theorem 6 on pp. 60-1 in \cite{OW}). We won't go into much detail here, with the exception of what needs to be done to encode the bases of the Rokhlin towers for the
           F{\o}lner sets.

            If we don't insist on limiting the number of sets in our partitions then this encoding can be easily done using extra symbols. However,
              if we want to restrict the number of elements of the partition, as was done in the proof in the proof of Theorem \ref{nonB} (where in the passage from $\PP$ to $\PP'$ no extra symbol
             was used) a more elaborate argument is required.
           For $\bbZ$ we used two things, a  block marked $0^N10^N$ ,  and then the symbol $1$ at a low density arithmetic progression so that the large $0$-block would be
              uniquely placed.  For a general group $G$ the arithmetic progression is replaced by a $K$-separated and $K^2$-covering set for large finite  symmetric $K \subset G$
              that contains the identity.

              In more detail, we say that a subset $C \subset G$ is \textbf{ K-separated} if the sets $\{ Kc : c \in C \} $  are disjoint. If $K$ is symmetric any maximal K-separated
              set $C$ is a \textbf{$K^2$-covering}, which means that $K^2C = G$. The $K$ will be chosen so that $\frac{1}{|K|}$ is sufficiently small. Then we will let $L$ be a larger
              symmetric set so that $L$ will contain in addition to $K^2$ another disjoint translate of $K^2$. Then if the F{\o}lner set $F$ is sufficiently invariant and $\frac{|K|}{|F|}$
           is sufficiently small we can mark the base of an $F$ tower by placing all levels corresponding to $g \in L$ in $\PP_0'$
         with the exception of the base itself which is put into $\PP_1'$.
          Then on the rest of tower we take a maximal K-separated set and place the levels corresponding to this set in $\PP_1'$.
      In addition all points not in the Rokhlin towers are placed in $\PP_1'$.

         After marking the base of the tower the rest of the proof proceeds exactly as in the case of $\bbZ$.
         Here is the proposition whose proof we have just sketched:

              \begin{proposition}\label{modify-amen}Let $\XBG$ be an ergodic system where $G$ is a countable amenable group, and $\PP$, $\Q$ a pair of partitions. If the entropy $h$ of the process defined by $\Q$
              satisfies
           $h < \epsilon$ then for any $\delta > 0$ there are $\PP'$ , $\Q'$ such that:
          \begin{enumerate}
          \item $d(\PP, \PP') < \epsilon$,
          \item $\Q' \subset \bigvee_{-\infty}^{\infty}T^{-i}\PP'$, and
          \item $d (\Q, \Q') < \delta$.
          \end{enumerate}
          \end{proposition}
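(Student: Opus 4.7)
The plan is to mirror the $\bbZ$-argument behind Proposition \ref{modify-new} line by line, replacing each ingredient with its amenable-group analogue from \cite{OW}. First I fix an error tolerance $\epsilon, \delta > 0$ and a parameter $K \subset G$ a large finite symmetric set containing the identity with $1/|K|$ much smaller than $\delta$ and $\epsilon$. Using the Shannon-McMillan theorem along any F\o lner sequence, I choose a sufficiently invariant F\o lner set $F$ (large and invariant relative to $K$ and $L$, where $L$ is a symmetric set containing two disjoint translates of $K^2$) so that there is a family $\G$ of atoms of $\bigvee_{g \in F} T_g^{-1}\Q$ with $\mu(\cup \G) > 1 - \delta/10$ and $|\G| < 2^{(\epsilon/2)|F|}$. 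Then I apply the strong Rokhlin lemma (Theorem 6 of \cite{OW}) to obtain an $F$-tower with base $B$ that is nearly independent of $\cup \G$ and covers all but a $\delta/10$-fraction of $X$. For expositional simplicity I treat the tower as having a single shape $F$; the multi-shape case requires only notational changes.

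Next I modify $\PP$ on the tower to produce $\PP'$ in three layers. The first layer is a marker scheme for the base $B$: on each tower column I place the levels indexed by the marker set $L$ into $P'_0$, except the base itself which goes into $P'_1$; then within the remaining part of the column I take a maximal $K$-separated (hence $K^2$-covering) subset and place those levels into $P'_1$. I also dump all points outside the Rokhlin tower into $P'_1$. Because $L$ contains two disjoint translates of $K^2$ while the scattered $1$'s sit on a set that is $K$-separated, the pattern at the base cannot occur anywhere else in the column, and this lets us recover $B$ from the $\PP'$-name alone. The second layer is the encoding: on each column above an atom $A \in \G$, I use the remaining unmodified levels (whose density in $F$ is close to $1$) to write a distinct binary word $\pi(A) \in \{0,1\}^{(\epsilon/2)|F|}$; atoms not in $\G$ get a single reserved word. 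On all other levels I set $P'_j = P_j$.

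From $\PP'$ I define $\Q'$ by reading the $\PP'$-name of each point: locate the nearest base translate, decode the word to recover the intended atom $A$, and declare the point to lie in whatever cell of $\Q$ that $A$ prescribes for its offset in $F$. On the $(1-\delta)$-large set where the point sits in a column over an atom of $\G$, this returns the true $\Q$-label, so $d(\Q, \Q') < \delta$; and by construction $\Q' \subset \bigvee_{g \in G} T_g^{-1}\PP'$. The density of modified levels is bounded by $|L|/|F| + 1/|K| + \epsilon/2 + \delta/10$, which is at most $\epsilon$ once $K$ and $F$ are chosen appropriately, giving $d(\PP, \PP') < \epsilon$.

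The main obstacle is the marking step: in $\bbZ$ the block $0^N 1 0^N$ plus a sparse arithmetic progression of $1$'s uniquely locates the base, but in a general countable amenable $G$ there is no canonical ordering, so I must substitute a combinatorial substitute. The argument that the $L$-marker together with the $K$-separated sprinkle of $1$'s is unrepeatable inside a single column (and cannot straddle two columns) is the one genuinely new combinatorial point; everything else is bookkeeping, and I will leave those routine density estimates to the reader as the author does.
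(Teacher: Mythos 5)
Your proposal is correct and follows the paper's own argument essentially step for step: Shannon--McMillan along a F{\o}lner sequence plus the strong Rokhlin lemma of \cite{OW}, the $L$-marker (two disjoint translates of $K^2$) combined with a maximal $K$-separated, $K^2$-covering sprinkle of $1$'s to locate the base, and the binary-word encoding of the atoms of $\G$ with one reserved word for the exceptional atoms. The paper itself offers only this same sketch, so no further comparison is needed.
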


             With this we can prove:
             \begin{theorem}\label{nonB-amen} If $G$ is a countable amenable group and the ergodic system $\XBG$ has positive entropy and is not Bernoulli then the non-Bernoulli partitions form a dense open set
       in the space of partitions.
       \end{theorem}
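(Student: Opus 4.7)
The plan is to recycle the argument of Theorem \ref{nonB} in the amenable setting, with Proposition \ref{modify-amen} now playing the role that Proposition \ref{modify} (together with Denker's strict-ergodicity step) played over $\bbZ$. The openness half is free: it was already noted at the start of the section that Bernoulli $G$-processes are Finitely Determined and that the Finitely Determined processes form a d-bar closed family by (\cite{OW} III. Theorem 4), so the non-Bernoulli partitions form an open set in the partition space.

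For density I would begin with $\PP$ and $\epsilon>0$ and first manufacture a low-entropy non-Bernoulli ``witness'' partition $\Q$. Using the weak Pinsker decomposition for countable amenable group actions (the amenable analogue of Austin's theorem), the hypothesis that $\XBG$ is not Bernoulli supplies an intermediate factor $\B_0\subset\B$ of entropy $h<\epsilon/2$ that is itself non-Bernoulli (otherwise $\XBG$ would be the independent join of two Bernoulli factors and hence Bernoulli). The Krieger-type finite generator theorem for amenable groups then gives a finite generating partition $\Q$ for $\B_0$; this $\Q$ is non-Bernoulli, and by the openness already established there is some $\delta>0$ such that every partition within $\delta$ of $\Q$ in the partition metric also generates a non-Bernoulli process.

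Finally I would apply Proposition \ref{modify-amen} to the data $(\PP,\Q,\epsilon,\delta)$, producing partitions $\PP'$ and $\Q'$ with $d(\PP,\PP')<\epsilon$, $d(\Q,\Q')<\delta$, and $\Q'\subset\bigvee_{g\in G}T_g^{-1}\PP'$. The last inclusion says that the process generated by $\PP'$ has $\Q'$ as a factor, and the choice of $\delta$ forces $\Q'$ to be non-Bernoulli; since a system with a non-Bernoulli factor is itself non-Bernoulli, $\PP'$ is non-Bernoulli, which is the density claim.

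The main obstacle is not the logical structure of the proof but verifying that each of the classical ingredients survives the passage from $\bbZ$ to a general countable amenable $G$: the weak Pinsker property, the finite generator theorem relative to a factor, and the Shannon--McMillan/strong Rokhlin inputs already absorbed into Proposition \ref{modify-amen}. Each of these has an amenable-group version in the \cite{OW} framework (and Austin's weak Pinsker theorem has been extended to the amenable setting), so once Proposition \ref{modify-amen} is granted the remainder of the proof is essentially the same two-line assembly as in the $\bbZ$ case.
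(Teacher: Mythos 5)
Your proposal is correct and follows essentially the same route as the paper's own proof: openness via the d-bar closedness of Finitely Determined processes, then density by combining the amenable weak Pinsker property, the amenable Krieger generator theorem from \cite{DP}, and Proposition \ref{modify-amen} exactly as you describe.
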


              \begin{proof}

    We first note that, just as is the case for $\bbZ$, for any countable amenable group $G$, the Bernoulli
  processes are closed in the d-bar metric. This implies that the non-Bernoulli partitions form an open set, and our task is to show that this set is dense. Given a partition $\PP$ and an $\epsilon$
  we first apply the weak Pinsker property for amenable groups (see \cite{A}) to find a non Bernoulli factor with entropy  $h  < \epsilon$.  We now need to use the extension of
  Krieger's finite generator theorem to arbitrary countable amenable groups which can be found for example in \cite{DP}. This will give a generating partition $\Q$ for this non Bernoulli factor. Since
  the non Bernoulli partitions are open there is some $\delta > 0$ so that any partition within $\delta$ of $\Q$ is also non Bernoulli. Now we can apply Proposition \ref{modify-amen} and conclude the proof. $\Box$

     \end{proof}

\section{Generators}

  To begin with I will formulate Remark \ref{generators} as formal theorems.

  \begin{theorem}\label{zero entropy} If $\XBT$ is ergodic and has zero entropy then for a generic set of subsets $A \subset X$ the partition $\{A, X \setminus A \}$ is a generator.
  \end{theorem}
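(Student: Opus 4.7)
The plan is to combine Propositions \ref{Gdelta} and \ref{modify}, using Krieger's finite generator theorem to reduce the property ``is a generator'' to a single $G_\delta$ condition.

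First I would invoke Krieger's generator theorem in the zero entropy case ($h(T) = 0 < \log 2$) to produce a 2-set generator $\mathcal{G} = \{A_*, X \setminus A_*\}$ for $\B$. Identifying a subset $A \subset X$ with its 2-set partition $\{A, X \setminus A\}$, equip the space of subsets with the complete separable metric $d(A, A') = \mu(A \triangle A')$. By Proposition \ref{Gdelta} applied with $\Q = \mathcal{G}$, the set
\[
V = \Big\{ A : \mathcal{G} \subset \bigvee_{i \in \bbZ} T^i \{A, X \setminus A\} \Big\}
\]
is a $G_\delta$. Because $\mathcal{G}$ itself generates $\B$, membership in $V$ is equivalent to $\{A, X \setminus A\}$ being a generator, so $V$ is exactly the set of subsets the theorem is about.

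The density of $V$ would come from Proposition \ref{modify}, applied to an arbitrary $\PP_0 = \{A_0, X \setminus A_0\}$ with factor $\B_0 = \B$ (which has zero entropy by hypothesis). This yields a nearby two-label partition $\PP' = \{A, X \setminus A\}$ whose process contains Denker's generator of $\B_0 = \B$; since that generator already generates $\B$, so does $\PP'$, and in particular $\mathcal{G} \subset \bigvee_i T^i \PP'$, placing $A$ in $V$. The step that needs a direct check, and is the main potential obstacle, is verifying that the modification procedure of Proposition \ref{modify} preserves the number of atoms. A scan of the proof of Theorem \ref{nonB} confirms this: the construction only ever relabels Kakutani--Rohlin tower levels with the two symbols $P_0'$ and $P_1'$, so a 2-set input produces a 2-set output. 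With this in hand, $V$ is a dense $G_\delta$ and the theorem follows.
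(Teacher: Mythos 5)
Your proposal is correct and follows essentially the same route as the paper, which simply cites Propositions \ref{Gdelta} and \ref{modify}: the fixed generating partition makes ``is a generator'' a $G_{\delta}$ condition, and the modification argument (which, as you rightly check, never increases the number of atoms) gives density. The only cosmetic difference is your appeal to Krieger's theorem for the fixed two-set generator, which is redundant since the Denker partition $\Q$ supplied by Proposition \ref{modify} already serves as the fixed partition in Proposition \ref{Gdelta}.
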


    This follows immediately from Propositions \ref{Gdelta} and \ref{modify} in section 4.

  \begin{theorem}\label{rel zero entropy} If the ergodic system $\XBT$ has positive entropy and $\B_0$ is an invariant sub-sigma algebra that has full entropy
  then  for a generic set of subsets $A \subset X$ the partition $ \PP = \{A, X \setminus A \}$ satisfies $\B = \B_0 \bigvee ( \bigvee_{-\infty}^{\infty} T^{-i}\PP) $,
  i.e. $\PP$ is a relative generator.
  \end{theorem}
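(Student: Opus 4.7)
The plan is to replicate the derivation of Theorem \ref{zero entropy} from Propositions \ref{Gdelta} and \ref{modify}, now working relative to $\B_0$ and invoking Proposition \ref{relmodify} in place of \ref{modify}. The key preliminary observation is that full entropy of $\B_0$ forces every finite partition $\Q$ of $X$ to have zero entropy conditional on $\B_0$: Pinsker's formula gives $h(T, \Q \vee \B_0) = h(T, \B_0) + h(T, \Q \mid \B_0)$, and since $h(T, \Q \vee \B_0) \leq h(T) = h(T, \B_0)$ we conclude $h(T, \Q \mid \B_0) = 0$. (In the infinite-entropy case one argues similarly on an increasing sequence of finite-entropy subfactors, or takes this as the definition of full entropy.) Hence Proposition \ref{relmodify} is applicable to any finite target partition $\Q$.

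Fix a countable family $\{C_m\}_{m \geq 1}$ of sets whose generated $\s$-algebra equals $\B$, available because $\XB$ is standard, and set $\Q_m = \{C_m, X \setminus C_m\}$. Mimicking Proposition \ref{Gdelta}, for each $m$ and positive integers $k, n$ define
\[
U_m(k, n) = \bigl\{A \subset X : \exists\, \R \subset \B_0 \vee \textstyle\bigvee_{i=-n}^{n} T^{-i}\{A, X \setminus A\},\ d(\R, \Q_m) < 1/k \bigr\}.
\]
These sets are open by the same perturbation argument as in Proposition \ref{Gdelta}: a small change in $A$ produces a small change in the finite join, and in any fixed Boolean combination of its atoms with elements of $\B_0$. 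Setting $V_m = \bigcap_k \bigcup_n U_m(k, n)$, every $A \in V_m$ satisfies $\Q_m \subset \B_0 \vee \bigvee_{i \in \bbZ} T^{-i}\{A, X \setminus A\}$, so any $A \in \bigcap_m V_m$ generates, together with $\B_0$, all of $\B$. By Baire's theorem it suffices to prove each $\bigcup_n U_m(k, n)$ is dense.

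For density, fix $A$, $\epsilon > 0$, and $k$. Apply Proposition \ref{relmodify} with $\PP = \{A, X \setminus A\}$, $\Q = \Q_m$, and $\delta = 1/(2k)$; the zero-relative-entropy hypothesis holds by the opening remark. This yields $\PP' = \{A', X \setminus A'\}$ and $\Q_0$ with $d(A, A') < \epsilon$, $d(\Q_0, \Q_m) < 1/(2k)$, and $\Q_0 \subset \B_0 \vee \bigvee_{i \in \bbZ} T^{-i}\PP'$. Since any element of the right-hand $\s$-algebra can be approximated in measure by one measurable with respect to $\B_0 \vee \bigvee_{i=-n}^{n} T^{-i}\PP'$ for $n$ sufficiently large, we obtain $\R$ in the truncated algebra with $d(\R, \Q_0) < 1/(2k)$, hence $d(\R, \Q_m) < 1/k$, so $A' \in U_m(k, n)$ for this $n$. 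The only non-routine ingredient is the entropy bookkeeping that legitimises the use of Proposition \ref{relmodify}; apart from this, the argument is a straightforward relativisation of the proof of Theorem \ref{zero entropy}.
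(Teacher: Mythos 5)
Your proposal is correct and follows essentially the same route the paper intends: the paper's one-line proof defers entirely to Proposition \ref{relmodify}, and you supply exactly the expected details, namely the observation that full entropy of $\B_0$ forces zero relative entropy for every finite partition (so that Proposition \ref{relmodify} applies), together with the relativised version of the $G_{\delta}$ argument of Proposition \ref{Gdelta} and a Baire category conclusion.
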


   This theorem follows easily from Proposition \ref{relmodify} in section 5.

   If we are looking for finite partitions in positive entropy systems that generate the full
   sigma-algebra by themselves then the value of the entropy gives a lower bound on the number of sets in the partition.
   That this is the only restriction was first
    shown by Krieger in \cite{K}. He showed that if the system $\XBT$ has positive entropy $h <$  log $a$ then
    there exists partition a $\PP$ of $X$ with $a$ elements that generate the sigma-algebra $\B$.  As always, it is understood
   that this is modulo $\mu$-null sets. In \cite{BKS} a new proof of this theorem was given by showing that in a certain space of joinings a generic element
   will define a generating partition with $a$ elements. However this space of joinings takes us out of the space of partitions of $X$ with $a$ elements.

   By the continuity of the entropy the space of partitions whose entropy is equal to $h$ is closed.  I mean by this, the entropy of the
   process defined by the partition equals $h$. The next theorem claims that in this space the generators form a generic set. This requires a new proof since to prove the density
   of generators in this set we have to perturb an arbitrary partition with full entropy to get a generator without increasing the number of elements of the partition.

     \begin{theorem} Let $\XBT$ be an ergodic system with entropy equal to $h <$ log $a$, and denote by $\boldsymbol{\mathcal{P}}_a$ the space of partitions into $a$ elements
     that define processes with entropy equal to $h$. For a dense $G_{\delta}$ subset of $\boldsymbol{\mathcal{P}}_a$ the partitions are generators.
    \end{theorem}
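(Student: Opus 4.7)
The theorem has three parts: $\PPP_a$ is Polish, the generators in $\PPP_a$ form a $G_{\delta}$, and they are dense. The first is immediate since the $d$-metric on $a$-partitions dominates the $\bar d$-metric on processes, in which the process entropy is continuous; hence $\PP\mapsto h(\PP)$ is continuous on the Polish space of $a$-partitions and $\PPP_a=\{h(\PP)=h\}$ is a closed, Polish subset. The $G_{\delta}$ claim reprises Proposition~\ref{Gdelta}: fix any countable dense family $\{A_j\}\subset\B$, observe that a partition generates iff each $A_j$ lies in $\bigvee T^{-i}\PP$, and present this as a countable intersection of $G_{\delta}$ sets.

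The substance is density. Given $\PP\in\PPP_a$ and $\epsilon>0$, I will produce a generator $\PP'\in\PPP_a$ with $d(\PP,\PP')<\epsilon$; membership in $\PPP_a$ is automatic because every generator of $\XBT$ has process entropy $h$. Write $\B_\PP=\bigvee T^{-i}\PP$. Since $h(\B_\PP)=h=h(\B)$, the extension $\B/\B_\PP$ has relative entropy zero, and the relative finite generator theorem (\cite{KW},\cite{DP}) furnishes a finite partition $\Q$ with zero relative entropy over $\B_\PP$ and $\B_\PP\vee\bigvee T^{-i}\Q=\B$. I then adapt the modification scheme of Theorem~1 and Theorem~\ref{nonB}: take a tall Kakutani--Rokhlin tower and overwrite $\PP$ on a small set of coding positions in each column, so that the new symbols simultaneously encode the original $\PP$-symbols they replace and the $\Q$-name of the column. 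The capacity of a coding slot is $\log a$ bits whereas the $\PP$-information it must record is essentially $h$ bits per slot, so the scheme fits precisely because of the gap $\gamma:=\log a-h>0$.

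In detail, pick $\delta<\epsilon/4$ and then $N$ so large that: (i) by the relative Shannon--McMillan theorem, all but a $\delta/10$-measure set of columns have ``typical'' length-$N$ $\Q$-names, and, conditionally on the $\PP$-past, the number of typical ones is at most $2^{\eta N}$ for some $\eta=\eta(N)\to 0$; (ii) by stationarity, the block inequality $H(X_{(1-\delta)N},\dots,X_{N-1}\mid X_0,\dots,X_{(1-\delta)N-1})\leq \delta N h+\eta N$ implies there are at most $2^{\delta N h+2\eta N}$ realizable completions of the last $\delta N$ $\PP$-coordinates given the first $(1-\delta)N$. Apply Lemma~1 to obtain a $(B,N)$-KR tower; prepend a short marker of type $0^{N_0}10^{N_0}$ (as in the proof of Theorem~1, chosen so that it can occur only at a column base of the modified partition); declare the top $\delta N$ levels of each column the coding positions, and set $\PP'=\PP$ elsewhere. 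The set of pairs (typical $\Q$-name, typical $\PP$-completion) has size at most $2^{\delta N h+3\eta N}$, which is strictly less than $a^{\delta N}=2^{\delta N(h+\gamma)}$ once $3\eta<\delta\gamma$, so fix once and for all an injection of these pairs into $\{1,\dots,a\}^{\delta N}$ and use it to define $\PP'$ on the coding positions of typical columns; on atypical columns leave $\PP'=\PP$.

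To finish, $d(\PP,\PP')$ is bounded by the measure of coding levels plus markers plus atypical columns, and is therefore at most $\delta+o(1)+\delta/10<\epsilon$. For generation, from a $\mu$-a.e.\ $\PP'$-orbit one reads off the markers and thus the tower structure, inverts the injection on each typical column to recover both the original $\PP$-symbols at the coding positions and the $\Q$-name of the column, and, combined with the unmodified $\PP$-symbols, reconstructs the full $\PP$- and $\Q$-trajectories; hence $\B_{\PP'}\supseteq\B_\PP\vee\B_\Q=\B$ and $\PP'$ is a generator. The main obstacle is the joint-encoding count of the previous paragraph: the $\delta N$ coding slots must record two kinds of information inside $\{1,\dots,a\}^{\delta N}$ at once, and this is possible only because $\gamma=\log a-h>0$.
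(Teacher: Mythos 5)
Your reduction of the $G_{\delta}$ part and the closedness of $\PPP_a$ match the paper, but your density step takes a genuinely different route. The paper first perturbs $\PP$ to a nearby \emph{strictly ergodic} partition and uses Denker's strictly ergodic generator $\Q$, so that the number of genuine names of length $m$ that actually occur is at most $e^{(h+\epsilon/100)m}\ll a^{m}$ and the code words can be chosen from a set \emph{disjoint from every occurring name}; you instead keep $\PP$, take a relative finite generator $\Q$ over $\B_{\PP}$, and use Shannon--McMillan counting plus the gap $\log a-h$ to inject the pair (erased $\PP$-symbols, $\Q$-name) into the erased slots. The counting itself is sound: $H(\PP_{(1-\delta)N}^{N-1}\mid \PP_{0}^{(1-\delta)N-1})\le \delta Nh+o(N)$ because $H(\PP_0^{N-1})=Nh+o(N)$ while $H(\PP_0^{(1-\delta)N-1})\ge (1-\delta)Nh$, and the conditional count of $\Q$-names is $2^{o(N)}$ since $h(T,\PP\vee\Q)=h(T,\PP)$. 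This joint-encoding idea is a legitimate alternative to the paper's use of topological entropy.

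The gap is in decodability, and it is exactly what the strict-ergodicity device is there to kill. First, your code words range over all of $\{1,\dots,a\}^{\delta N}$, and nothing prevents a code word from coinciding with a genuine $\PP$-block at the top of an atypical column (which you leave unmodified); the decoder therefore cannot tell whether a given top block is coded or original, a wrong guess corrupts the recovered $\PP$-name on a set of positive measure, and then neither $\B_{\PP'}\supseteq\B_{\PP}$ nor your argument that $\PP'$ has full entropy (hence lies in $\PPP_a$) survives. Second, on the atypical columns --- whose measure is only bounded by $\delta/10$ and need not vanish --- the $\Q$-name is encoded nowhere, so ``reconstructs the full $\Q$-trajectory'' is false and exact generation, which you invoke both for $\PPP_a$-membership and for the conclusion, fails. (Approximate recovery of $\Q$ would in fact suffice for density of each open set $V_k$ in the $G_{\delta}$ decomposition of Proposition \ref{Gdelta}, but only if the $\PP$-name is recovered \emph{exactly} so that $h(T,\PP')\ge h(T,\PP)=h$; that returns you to the first problem, and in any case the $\PP$-symbols under your markers are never encoded either.) The paper avoids all of this by making $\PP'$ and $\Q$ strictly ergodic, choosing the code sets $C$ and $D$ disjoint from every name that actually occurs so that coded blocks are self-identifying, and overwriting the bad columns entirely with their $\Q$-names under the flag $D$. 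To repair your argument you would need either that device or some other unambiguous way to mark which blocks carry code, together with explicit handling of the atypical columns and marker positions.
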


    \begin{proof} By the theorem of M. Denker in \cite{D} there exists a partition $\Q$  into $a$ sets that is a generator and furthermore,
           the subshift defined by $\Q$ is strictly ergodic and hence has
           topological entropy equal to $h$. The claim that the set of generating partitions  in $\boldsymbol{\mathcal{P}}_a$ is a $G_{\delta}$ is proved using the same
           proof as that of Proposition \ref{Gdelta} in section 4. We repeat the definition of the relevant open sets:

           $$ U(k,n) = \{ \PP \in \PPP_a : \hspace{0.2cm} \exists \Q_0 \subset \bigvee_{i=-n}^{n}T^i\PP \hspace{0.2cm} \textrm{such that} \hspace{0.2cm} d(\
           Q_0, \Q) < \frac{1}{k} \}. $$

           Each of these sets are clearly open and a partition is a generator if and only if it is contained in $\bigcap_{k=1}^{\infty} \bigcup_{n=1}^{\infty} U(k,n)$. Now comes the more difficult
           part of the proof. We must show that for every fixed $k$ the open set  $ V_k = \bigcup_{n=1}^{\infty}U(k,n)$ is dense in $\PPP_a$. We are given an arbitrary element $\PP \in \PPP_a$
           and some $\epsilon >0$. Our task is to construct $\hat{\PP}$, an $\epsilon$-perturbation of $\PP$, that is a member of $V_k$. In particular it has to have full entropy. To accomplish this we will show that
           the invariant sigma-algebra that it generates already contains $\Q$.

           Fix a small $\delta >0$ which will be specified later. As a first step we replace $\PP$ by a partition $\PP'$ that satisfies:
           \begin{enumerate}
           \item $d(\PP, \PP') < \delta$, and
           \item the subshift defined by $\PP'$ is strictly ergodic.
           \end{enumerate}

           For this fact see \cite{W} or \cite{D}. In the latter paper while this fact is not stated explicitly it does follow from the detailed proof given there.
           The  entropy of $\Q$ relative to $\PP'$ tends to zero as $\delta$ tends to zero. By the relative Shannon-McMillan theorem for any sufficiently large $N$
           we can find a $(B,N)$-KR tower such that the following holds.  

             We first divide the tower into pure columns with respect to the partition $\PP'$.
             For all but an $\frac{\epsilon}{3}$ fraction of these columns when a column is further
             purified by the partition $\Q$ the number of  these sub-columns does not exceed  $e^{\frac{\epsilon}{100}\times N}$. 
             
             In the ensuing discussion, in order not to complicate the notation I have treated the tower
           as though all columns have height $N$. Of course everything that follows should be done separately for the two bases and their columns of heights $N$ and $N+1$.

             Returning to the $\PP'$-pure columns which  have a very small exponential number of $\Q$ refinements, 
             we will change the partition $\PP'$ on the first $\frac{\epsilon}{3}\times N$ levels in order to include a coding of the 
             $\Q-N$-name of the column. It is here that we make use of the fact that the partition $\PP'$ is strictly ergodic so that its topological entropy is strictly less than log a, 
             while we have a symbols at our disposal. 
             
             The encoding is done as follows.  For $N$ sufficiently large the set of possible $\PP' -(\frac{\epsilon}{3}\times N)$ names of points is at most 
             $$e^{ (h + \frac{\epsilon}{100}) \times (\frac{\epsilon}{3} \times N)}.$$
             
              Since $\Q$ is also strictly ergodic, for $N$ sufficiently large, the same estimate holds for the number of $\Q$ names of length $(\frac{\epsilon}{3}\times N)$.
              With each $\PP' -N$ name we have  a number ranging from $1$ to at most $e^{\frac{\epsilon}{100}\times N}$ which together with the $\PP'-N$ name determines 
              the entire $\Q-N$ name of that column. We then take a subset $C$ of $\{ 1,2,...a \}^{\frac{\epsilon}{3}\times N}$ that is disjoint from 
           all $\PP'-\frac{\epsilon}{3}\times N$ and $\Q-\frac{\epsilon}{3}\times N$ names and has the appropriate cardinality 
           to rename the first $\frac{\epsilon}{3}\times N$ levels  in such a way as to recover the entire partition $Q$. 
            This is done by first determining the $\PP'-\frac{\epsilon}{3}\times N$ name on these levels.
            This completes the entire $\PP'-N$ name and then  additional number  determines the $\Q-N$ name of the entire column.

               For the columns where we have no control on the further refinement by $\Q$, since they form a small set we are free to change $\PP'$ on  entire columns. 
                We simply change the partition $\PP'$ to $\Q$. In order to ensure that we also can recover the base $B$ we take another subset $D$ of 
                 $\{ 1,2,...a \}^{\frac{\epsilon}{3}\times N}$
                that is disjoint from $C$ and  all $\PP'$ and $\Q$ names and has the appropriate cardinality to relabel the first ${\frac{\epsilon}{3}\times N}$ levels 
                so that they correspond uniquely to the $\Q$ name but are recognizable. On all other levels the $\PP'$ name is simply changed ot the $\Q$ name. Finally the $\delta$
                is chosen so that the assertions made above will be satisfied for $N$ sufficiently large. 
                This completes the proof of the theorem. $\Box$

     \end{proof}

\end{document}